\newtheorem{definition}{Definition}
\newtheorem{remark}{Remark}
\newtheorem{theorem}{Theorem}
\newtheorem{lemma}{Lemma}
\newtheorem{proposition}{Proposition}
\newtheorem{counterex}{Counterexample}
\begin{document}

\title{Finite Alphabet Control of Logistic Networks with Discrete Uncertainty}
\author{Danielle~C.~Tarraf\thanks{D. C. Tarraf is with the Department of Electrical \& Computer Engineering at Johns 
Hopkins University, Baltimore, MD 21218 (dtarraf@jhu.edu).} 
\hspace*{5pt}and 
Dario~Bauso\thanks{D. Bauso is with the Dipartimento di Ingegneria Chimica, Gestionale, 
Informatica e Meccanica, Universit\'a di Palermo  (dario.bauso@unipa.it).}
}

\maketitle

\begin{abstract}
We consider logistic networks
in which the control and disturbance inputs take values in finite sets.
We derive a necessary and sufficient condition for the existence of robustly control invariant (hyperbox) sets.
We show that a stronger version of this condition is sufficient to guarantee robust global attractivity, 
and we construct a counterexample demonstrating that it is not necessary.
Being constructive, our proofs of sufficiency
allow us to extract the corresponding robust control laws
and to establish the invariance of certain sets.
Finally, we highlight parallels between our results and existing results in the literature,
and we conclude our study with two simple illustrative examples.
\end{abstract}

\section{Introduction}
\label{Sec:Introduction}
Production networks, distribution networks and transportation
networks are all instances of logistic networks,
essentially dynamic network flow problems in which the inventories,
controlled flows, supplies and demands evolve in time.
The dynamics of these systems are linear: 
The state of the system represents accumulated products in production stages, 
inventory stored in warehouses,
or commodities available at hubs. 
The control inputs represent the controlled flows between these production stages, 
warehouses or hubs, 
such as production processes or transportation routes.
The disturbance inputs represent fluctuating supplies of raw materials 
as well as unknown consumer demands.
Practically, excess inventory results in undesirable holding costs, 
while shortages result in disruptions 
in the production or supply chain and a potential loss of clients.
In view of this tradeoff, 
it is desirable to maintain inventory in all parts of the network 
within reasonable bounds in spite of the operating uncertainty.

In this context, questions of stability, reachability and robustness naturally arise.
Such questions have been previously considered in the control literature on
production-distribution or multi-storage systems where the unknown but bounded disturbances 
as well as the control inputs are assumed to take values in analog sets,
and the invariant sets of interest 
range from hyperboxes to polytopes, ellipsoids, and more generally, convex sets
\cite{JOUR:BlRiUk1997, JOUR:BlRiUk1997b, JOUR:BlMiUk2000, BOOKCHAPTER:BlMPRU2001, JOUR:BaGiPe2010}.
Related research directions in operations research address optimization 
problems in uncertain, multi-period inventory models.
For instance, base stock policies 
can be interpreted in terms of guaranteeing upper bounds on robustly control invariant sets.
Traditionally, cost functions penalizing holding and storage costs are used in conjunction with 
dynamic programming techniques to robustly maintain the stock within a neighborhood of zero.
In a bid to overcome the dimensionality problems inherent in dynamic programming 
and to mitigate undesirable dynamics,
robust optimization based approaches \cite{JOUR:BerThi2006} such as 
Affinely Adjustable Robust Counterpart (AARC) methods,
Globalized Robust Counterpart (GRC) methods \cite{JOUR:BeGoSh2009}, and various extensions,
have been developed.
In the supply chain literature, 
the network topology is often restricted to a tree, chain,
or other special structure that can be exploited in the solution.

Robustly control invariant sets have also been studied in a more general control context
due to their relevance to a variety of control problems, including constrained control and robust model predictive control.
Early results exploited dynamic programming techniques to characterize
invariant sets \cite{JOUR:BerRho1971}, 
and derived necessary and sufficient conditions for robust control invariance, 
expressed in terms of set equalities \cite{JOUR:Bertse1972}. 
An in-depth survey of  the existing body of literature in control, emphasizing techniques that exploit
connections with control Lyapunov functions, 
dynamic programing, and classical analytical results can be found in \cite{JOUR:Bl1999}. 
More recent characterizations of robustly control invariant sets for discrete-time linear 
systems were proposed in \cite{JOUR:RaKeMa2007}, 
allowing for tractable, convex-optimization based analysis approaches.

In this paper, we consider discrete-time dynamic network flow problems
in which the inputs are restricted to take their values in finite alphabets,
and we focus on a class of polytopic invariant sets (hyperboxes).
We seek to address the following two questions:
Under what conditions does a robustly control invariant hyperbox exist?
And under what conditions is such a set robustly globally attractive?
We present two main results:
First, we derive a necessary and sufficient condition for the existence of robustly control invariant hyperboxes. 
Second, we show that a stricter version of this condition is sufficient,
though not necessary in general, 
to guarantee robust global convergence of all state trajectories  to the invariant set. 
While the problems are posed as existence questions, 
our results touch on analysis and synthesis problems as well.
As the proposed proofs of sufficiency are constructive,
they allow one to immediately extract the corresponding robust control laws.
Moreover, they establish sufficient conditions for verifying invariance of certain sets.
Finally, we highlight connections between our derived results and existing ones
in the literature for the traditional setting.

The novelty in our setup,
which distinguishes it from most of the above referenced literature, 
is in the discrete nature of the control and disturbance inputs in conjunction with set based models of uncertainty.
Discrete action spaces are justifiable from both practical and theoretical perspectives:
From a practical standpoint, goods are usually processed, transported and distributed in batches. 
From a theoretical standpoint, 
the study of systems under discrete controls and disturbances has sparked much interest in recent years 
as evidenced by the literature on finite alphabet control 
\cite{JOUR:GooQue2003, JOUR:TaMeDa2011,JOUR:TaMeDa2008, JOUR:Tarraf2012}, 
mixed integer model predictive control \cite{AVH10}, 
discrete team theory \cite{JOUR:DeWVan2000} and boolean control \cite{B09}
where the problems of interest are often formulated as min-max games \cite{BOOK:BasOls1999}.
Note that while the setup in \cite{JOUR:Bertse1972} is general enough to allow discrete inputs,
our derivation differs significantly as do our proposed conditions,
being stated explicitly in terms of conditions on the alphabet sets rather than set equivalence conditions.

{\it Organization:} We present the problem statement in Section \ref{Ssec:Statement}
and explain its practical significance in Sections \ref{Ssec:RelevanceModel} and 
\ref{Ssec:RelevanceProblem}.
We state the main results in Section \ref{Sec:MainResults},
present complete proofs in Section \ref{Sec:Derivation},
discuss connections to the existing literature in Section \ref{Sec:Discussion},
present simple illustrative examples in Section \ref{Sec:IllustrativeExamples},
and conclude with directions for future work in Section \ref{Sec:Conclusions}.

{\it Notation:} $\mathbb{R}$, $\mathbb{Z}$, $\mathbb{R}_+$ and $\mathbb{Z}_+$ 
denote the reals, integers, non-negative reals and non-negative integers, respectively. 
$[x]_i$ denotes the $i^{th}$ component of $x \in \mathbb{R}^n$,
while $\mathbf{1}$ denotes the vector in $\mathbb{R}^n$ whose entries are all equal to one.
$hull\{S\}$ and $int(S)$ denote the convex hull
and interior, respectively, of $S \subset \mathbb{R}^n$.
$\mathbb{B}^n$ denotes the set of vertices of the unit hypercube,
that is $\mathbb{B}^n = \{0,1\}^n$. 
For $\mathcal{A} \subset \mathbb{Z}$, $M \in \mathbb{Z}^{n \times m}$,
$M \mathcal{A}^m$ denotes the image of $\mathcal{A}^m$ by $M$,
that is 
$M \mathcal{A}^m = \{ b \in \mathbb{Z}^n | b = M a \textrm{     for some    } a \in \mathcal{A}^m\}$.
Given a set $X = [0,x_1^+] \times \hdots \times [0,x_n^+] \subset \mathbb{R}^n$, 
$V_{X}$ denotes its set of vertices, that is
$V_X = \{ x \in \mathbb{R}^n | [x]_i \in \{0,x_i^+\} \}$.

\section{Problem Setup}
\label{Sec:Problem}

\subsection{Problem Statement}
\label{Ssec:Statement}

Consider the system described by
\begin{equation}
\label{eq:Dynamics}
x(t+1) = x(t) + B u(t) - D w(t) 
\end{equation}
where $t \in \mathbb{Z}_+$, 
state $x(t) \in \mathbb{R}^n$,
control input $u(t) \in \mathcal{U}^{m}$
and disturbance input $w(t) \in \mathcal{W}^p$.
The control and disturbance alphabet sets 
$\mathcal{U}= \{a_1, \hdots, a_r\} \subset \mathbb{Z}$ and
$\mathcal{W} = \{b_1, \hdots, b_q\} \subset \mathbb{Z}$,
respectively, 
are ordered with $a_1 < \hdots < a_r$ and $b_1 < \hdots < b_q$. 
$B \in \mathbb{Z}^{n \times m}$, $D \in \mathbb{Z}^{n \times p}$ are given.
This system is a fairly general model of a logistic network,
as we will describe in Section \ref{Ssec:RelevanceModel}.

\smallskip
\begin{definition}
\label{def:RCIS}
A hyperbox $X = [0,x_1^+] \times \hdots \times [0,x_n^+] \subset \mathbb{R}^n$ 
is robustly control invariant if there exists a control law
$\varphi: X \rightarrow \mathcal{U}^m$ such that for every 
$x(t) \in X$, $x(t+1) = x(t) + B \varphi(x(t)) - D w(t) \in X$ for any disturbance $w(t) \in \mathcal{W}^p$. 
\end{definition}
\smallskip

\smallskip
\begin{remark}
When $X = [0,x_1^+] \times \hdots \times [0,x_n^+]$ is robustly control invariant, 
then so is any other hyperbox
$X' = [x_1^-,x_1^-+x_1^+] \times \hdots \times [x_n^-,x_n^-+x_n^+]$.
Indeed, control law $\varphi' : X' \rightarrow \mathcal{U}^m$
defined by $\varphi' (x) = \varphi(x-x^-)$,
where $[x^-]_i = x_i^-$, verifies this assertion.
\end{remark}
\smallskip

\smallskip
\begin{definition}
\label{def:GA}
A hyperbox $X = [0,x_1^+] \times \hdots \times [0,x_n^+] \subset \mathbb{R}^n$ 
is robustly globally attractive if there exists a control law
$\psi:\mathbb{R}^n \setminus X \rightarrow \mathcal{U}^m$ such that for every
initial condition $x(0) \in \mathbb{R}^n \setminus X$ and 
disturbance $w: \mathbb{Z}_+ \rightarrow \mathcal{W}^p$,
the corresponding state trajectory
satisfies $x(\tau) \in X$ for some $\tau \in \mathbb{Z}_+$. 
\end{definition}
\smallskip

We are interested in answering two questions for the system described in (\ref{eq:Dynamics}):

\smallskip
{\bf Question 1:} Under what conditions does a robustly control invariant hyperbox exist? 
\smallskip 

{\bf Question 2:} Under what conditions does a robustly globally attractive and 
control invariant hyperbox exist?
\smallskip

\subsection{Relevance of the Model} 
\label{Ssec:RelevanceModel}

The dynamics in (\ref{eq:Dynamics}) constitute a unified and fairly general 
abstract model of logistic networks such as 
production networks, distribution networks and transportation networks.

For instance, a production system consists of products in one of several 
possible forms (raw products, intermediate products or finished products),
together with various  production processes 
by which a product is transformed, or several products are combined, 
to yield a new intermediate or final product. 
This production system can be modeled as a network 
in which the products are represented as nodes
while the production processes are represented by (weighted) hyper-arcs.
These production processes are typically individually controlled by the operator of the network:
The hyper-arcs are thus associated with control inputs.
Additionally, the network may interact with its external environment through
both controlled and uncontrolled flows representing generally uncertain
supplies of raw material as well as demands of various finished products.
In the mathematical model (\ref{eq:Dynamics}) of this production network, 
the state vector $x(t)$ represents the amounts of the various products 
available in the network at time $t$: Specifically, $[x(t)]_i$ represents the amount of 
product $i$ in the network at time $t$. 
The `$Bu$' term encodes the production processes and nominal supplies,
and the rate at which they proceed, controlled by the network operator.
The `$Dw$' term, in contrast, encodes uncertain supplies and demands,
or possibly uncertainty in the production processes.
Specifically, matrices $B$ and $D$ represent the network topology,
with non-zero entries indicating hyper-arcs and their weights
and/or locations of external flows.
Inputs $u$ and $w$ represent the controlled and uncontrolled flows, respectively.
The amount of product $i$ at the end of a production cycle thus equals the 
amount available at the beginning of the production cycle,
plus any amounts produced,
minus any amounts depleted during the production cycle.

\begin{figure} [htb]
\centering
\includegraphics[width=8cm]{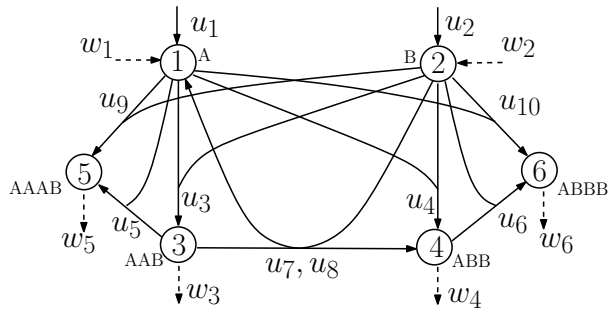}
\caption{Network representing a production system with 6 products}  
\label{fig:example3}
\end{figure}

To make things more concrete, 
consider a chemical production system in which 2 raw products,
referred to here as products A and B, can be used to produce 4 finished products,
say AAAB, AAB, ABB and ABBB.
This production system can be described by a network with 6 nodes,
each representing one of the 6 products, as shown in Figure \ref{fig:example3}.
Final product AAAB (represented by node 5 in the network) can be obtained 
through one of 2 production processes,
namely by combining products A and B in a 3:1 ratio, 
or by combining products A and AAB in a 1:1 ratio.
These two processes are represented by 2 hyper-arcs incoming into
node 5 in Figure 1, 
and are associated with control inputs $u_9$ and $u_5$, respectively,
as shown.
In contrast, product ABB (represented by node 4 in the network) 
can be produced by combining raw materials A and B in a 1:2 ratio,
or from product AAB by releasing one unit of A and adding one unit of B.
These two processes are represented by two hyper-arcs associated with 
control inputs $u_4$ and $u_8$, respectively.
Likewise, the two other finished products can be obtained by various production 
processes represented by hyper-arcs in the network,
and associated with control inputs $u_3$, $u_6$ and $u_7$.
The supply of raw products A and B is uncertain, 
represented by nominal flow rates associated with control inputs $u_1$ and $u_2$ 
and fluctuations in supply rates associated with disturbance inputs $w_1$ and $w_2$.
The demand for the 4 finished products is uncertain, 
represented by disturbance inputs $w_3$ through $w_6$.  
The $B$ matrix of the mathematical model describing this network is given by:
\begin{equation}
\label{Eq:MatrixB}
B=\left[\begin{array}{rrrrrrrrrr} 
1 & 0 & -2 & -1 & -1 & 0 & -1 & 1 & -3 & -1\\ 
0 & 1 & -1 & -2 & 0 & -1 & 1 & -1 & -1 & -3\\
0 & 0 & 1 & 0 & -1 & 0 & 1 & -1 & 0 & 0\\
0 & 0 & 0 & 1 & 0 & -1 & -1 & 1 & 0 & 0\\
0 & 0 & 0 & 0 & 1 & 0 & 0 & 0 & 1 & 0\\
0 & 0 & 0 & 0 & 0 & 1 & 0 & 0 & 0 & 1\\
\end{array}\right].
\end{equation}
In this representation, the $i^{th}$ column of the $B$ matrix represents the hyper-arc associated
with control input $u_i$: 
For instance,
the hyper-arc associated with input $u_9$ is represented by column 9.
The non-zero entries of the column indicate that three units of product A (represented by node 1) 
and one unit of product B (represented by node 2) and used to produce one unit of 
product AAAB (represented by node 5). 
As such, matrix $B$ encodes the topology of the network, 
namely the hyper-arcs and their weights.
Similarly, the $D$ matrix is given by:
\begin{equation}
\label{Eq:MatrixD}
D=\left[\begin{array}{rrrrrr} 
-1 & 0 & 0 & 0 & 0 & 0 \\ 
0 & -1 & 0 & 0 & 0 & 0 \\ 
0 & 0 & 1 & 0 & 0 & 0 \\ 
0 & 0 & 0 & 1 & 0 & 0 \\ 
0 & 0 & 0 & 0 & 1 & 0 \\ 
0 & 0 & 0 & 0 & 0 & 1 \\ 
\end{array}\right].
\end{equation}

Likewise in the case of distribution and transportation networks, 
the nodes of the network represent warehouses and transportation hubs, respectively, 
with the $i^{th}$ component of the state vector thus representing the quantity of commodities 
present at the $i^{th}$ warehouse/hub.
The `$Bu-Dw$' term encodes the various transportation routes,
distribution protocols, supplies and demands, 
with matrices $B$, $D$ representing the network topology
and inputs $u$ and $w$ again respectively representing the controlled flows and uncertainty in the system.

\subsection{Relevance of the Problem Statement} 
\label{Ssec:RelevanceProblem}

In this setting, it is intuitively desirable to contain each component of the state vector 
within two bounds, a zero lower bound and a positive upper bound.
The zero lower bound guards against shortages and interruptions in the production process,
or against the underuse of distribution and transportation resources.
The upper bound ensures that the storage capabilities of the system are not exceeded.
The question of existence of robustly control invariant sets, 
specifically hyperboxes (Question 1 in Section \ref{Ssec:Statement}), thus naturally arises.

Moreover in this setting, 
the model of uncertainty (specifically the choice of set $\mathcal{W}$) 
encodes the typical uncertainty encountered in day to day operations.
Since it is impossible to rule out rare occurrences of large unmodeled uncertainty 
that would drive the system away from its typical operation,
such as emergencies or catastrophic events,
it is reasonable to question whether the system can recover from such events:
The question of robust global attractivity of the robustly control invariant hyperboxes 
(Question 2 in Section \ref{Ssec:Statement}) thus also naturally arises.
 
\section{Main Results}
\label{Sec:MainResults}

Consider the following sets defined for $i \in \{1,\hdots,n \}$:
\begin{eqnarray*}\label{sets}
\nonumber \mathcal{U}_+^i & = & \{ u \in \mathcal{U}^m | [B u - D w]_i \geq 0, \forall w \in \mathcal{W}^p \}, \\
\nonumber \mathcal{U}_-^i & = & \{ u \in \mathcal{U}^m | [B u - D w]_i \leq 0, \forall w \in \mathcal{W}^p \}, \\
\nonumber \mathcal{U}_{+*} ^{i}& = & \{ u \in \mathcal{U}^m | [B u - D w]_i  > 0, \forall w \in \mathcal{W}^p \}, \\
\nonumber \mathcal{U}_{-*}^{i} & = & \{ u \in \mathcal{U}^m | [B u - D w]_i < 0, \forall w \in \mathcal{W}^p \}. 
\end{eqnarray*}
Associate with every $x \in \mathbb{R}_+^n$ a {\it signature},
namely an n-tuple $(s_1,\hdots,s_n)$ with $s_i=+$ if $[x]_i=0$ and $s_i=-$ if $[x]_i >0$,
and two subsets of $\mathcal{U}^m$ defined by
\begin{eqnarray*}
\mathcal{U}_x = \mathcal{U}_{s_1}^1 \cap \hdots \cap \mathcal{U}_{s_n}^n, \\
\mathcal{U}_x^* =  \mathcal{U}_{s_1*}^{1} \cap \hdots \cap \mathcal{U}_{s_n*}^{n}.
\end{eqnarray*}

We can now state the main results. 
The first provides a complete answer to Question 1: 

\begin{theorem}
\label{Thm:existence}
The following two statements are equivalent:
\begin{enumerate}
\item [(a)] There exists a set $X = [0,x_1^+] \times \hdots \times [0,x_n^+] $ that is robustly control invariant.
\item [(b)] The following condition holds
\begin{equation}
\label{Eq:SCexistence}
\mathcal{U}_z \neq \emptyset, \textrm{     } \forall z \in \mathbb{B}^n.
\end{equation}
\end{enumerate}
\end{theorem}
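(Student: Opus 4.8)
The plan is to prove the two implications separately, treating necessity (a)$\Rightarrow$(b) as a short evaluation of the invariance condition at the vertices of $X$, and devoting the bulk of the work to a \emph{constructive} proof of sufficiency (b)$\Rightarrow$(a). Throughout I will exploit that, since $B$, $D$, $u$ and $w$ are integer valued and $\mathcal{W}^p$ is finite, each coordinate of the one-step displacement $[Bu-Dw]_i$ ranges over a bounded set as $w$ varies. I will record the extremes $\mu_i(u)=\min_{w\in\mathcal{W}^p}[Bu-Dw]_i$ and $M_i(u)=\max_{w\in\mathcal{W}^p}[Bu-Dw]_i$, and note the reformulations $u\in\mathcal{U}_+^i \Leftrightarrow \mu_i(u)\ge 0$ and $u\in\mathcal{U}_-^i \Leftrightarrow M_i(u)\le 0$.

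For necessity, suppose $X=[0,x_1^+]\times\cdots\times[0,x_n^+]$ is robustly control invariant via some $\varphi$, with $x_i^+>0$. Fix $z\in\mathbb{B}^n$ and consider the vertex $v\in V_X$ with $[v]_i=x_i^+[z]_i$; its signature is exactly the $n$-tuple associated with $z$, so $\mathcal{U}_v=\mathcal{U}_z$. Setting $u=\varphi(v)$ and imposing $v+Bu-Dw\in X$ for every $w$ forces, coordinate by coordinate, $[Bu-Dw]_i\ge 0$ whenever $[v]_i=0$ and $[Bu-Dw]_i\le 0$ whenever $[v]_i=x_i^+$. Hence $u\in\mathcal{U}_z$, so $\mathcal{U}_z\neq\emptyset$.

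For sufficiency I will abandon the naive rule that pushes coordinate $i$ up exactly when $[x]_i=0$ and down when $[x]_i>0$: a downward push of integer size applied at a point with $[x]_i$ small but positive can overshoot below $0$, so the paper's coarse signature does not by itself yield invariance. Instead I will use a threshold control law. Put $\beta_i=\max_{u,w}[Bu-Dw]_i$ and $\gamma_i=-\min_{u,w}[Bu-Dw]_i$, choose $x_i^+\ge\beta_i+\gamma_i$ and a threshold $\theta_i\in[\gamma_i,\,x_i^+-\beta_i]$, and for $x\in X$ define the modified signature $s_i=+$ when $[x]_i\le\theta_i$ and $s_i=-$ when $[x]_i>\theta_i$. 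The associated $z\in\mathbb{B}^n$ has $\mathcal{U}_z\neq\emptyset$ by (b), and I set $\varphi(x)$ equal to any element of $\mathcal{U}_z$. A coordinate-wise check then closes the argument: when $[x]_i\le\theta_i$ we have $u\in\mathcal{U}_+^i$, so $[x]_i+[Bu-Dw]_i\ge 0$ and $[x]_i+[Bu-Dw]_i\le\theta_i+\beta_i\le x_i^+$; symmetrically, when $[x]_i>\theta_i$ we have $u\in\mathcal{U}_-^i$, so $[x]_i+[Bu-Dw]_i\le[x]_i\le x_i^+$ and $[x]_i+[Bu-Dw]_i>\theta_i-\gamma_i\ge 0$. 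Thus $X$ is robustly control invariant and the law $\varphi$ is exhibited explicitly.

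The main obstacle is exactly this sufficiency construction: quantization forces the shift from a zero/positive signature to a thresholded one, and the real content is sizing the box so that a single control vector simultaneously satisfies the upper constraint on the ``push-up'' coordinates and the lower constraint on the ``push-down'' coordinates. Verifying that $x_i^+\ge\beta_i+\gamma_i$ and $\theta_i\in[\gamma_i,\,x_i^+-\beta_i]$ are mutually consistent—namely that this interval is nonempty and contained in $[0,x_i^+]$, which reduces to $\beta_i\ge 0$ and $\gamma_i\ge 0$—is the crux, and it holds precisely because (b) makes each $\mathcal{U}_+^i$ and $\mathcal{U}_-^i$ nonempty. A minor loose end is the degenerate case $x_i^+=0$ in the necessity direction, where the two vertices collapse and one instead argues that the invariance of the point $[x]_i=0$ forces $u\in\mathcal{U}_+^i\cap\mathcal{U}_-^i$.
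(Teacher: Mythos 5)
Your proof is correct and follows essentially the same route as the paper's: necessity by evaluating the invariance constraint at the vertices of $X$ coordinate by coordinate, and sufficiency by a constructive threshold feedback law on a box sized by the extremal one-step displacements --- the paper's Lemma~\ref{Lemma:SCexistence} is exactly your construction with $\theta_i = L_i$ and $x_i^+ = 2L_i$. Your separate bookkeeping of $\beta_i$ and $\gamma_i$ and your explicit treatment of the degenerate case $x_i^+=0$ are minor refinements of, not departures from, that argument.
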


The second proposes a sufficient condition for the existence of 
a robustly control invariant set that is globally attractive, 
thus giving a partial answer to Question 2. 
In general, this condition need not be necessary as discussed in Section \ref{Sec:Derivation}.

\begin{theorem}
\label{Thm:ConvergenceSufficiency}
If the following condition holds 
\begin{equation}
\label{Eq:SCconvergence}
\mathcal{U}_z^*  \neq \emptyset, \textrm{      } \forall z \in \mathbb{B}^n
\end{equation}
there exists a robustly control invariant and globally attractive set 
$X=[0, x_1^+] \times \hdots \times [0, x_n^+]$.
\end{theorem}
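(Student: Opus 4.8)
The plan is to bypass Theorem~\ref{Thm:existence} and instead exhibit a single control law that certifies both properties directly for one explicit box. First I would record the consequence of integrality: since $B,D$ are integer matrices and $\mathcal{U},\mathcal{W}\subset\mathbb{Z}$, each coordinate $[Bu-Dw]_i$ is an integer, so $u\in\mathcal{U}^i_{+*}$ forces $[Bu-Dw]_i\ge 1$ and $u\in\mathcal{U}^i_{-*}$ forces $[Bu-Dw]_i\le -1$; moreover $M_i:=\max_{u\in\mathcal{U}^m,\,w\in\mathcal{W}^p}\lvert[Bu-Dw]_i\rvert$ is finite, and because (\ref{Eq:SCconvergence}) supplies, for each coordinate, controls that move it strictly up and strictly down, necessarily $M_i\ge 1$. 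I would then fix $x_i^+=2M_i$, write $c_i=M_i$ for the per-coordinate centre, and for each pattern $z\in\mathbb{B}^n$ select a representative $u_z\in\mathcal{U}_z^*$ using (\ref{Eq:SCconvergence}). The candidate is the \emph{aim-at-the-centre} law $\phi(x)=u_{s(x)}$, where $[s(x)]_i=-$ if $[x]_i\ge c_i$ and $[s(x)]_i=+$ otherwise; its restrictions to $X$ and to $\mathbb{R}^n\setminus X$ will serve as the $\varphi$ and $\psi$ of Definitions~\ref{def:RCIS} and~\ref{def:GA}.

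For robust control invariance I would argue coordinate by coordinate, using that the guaranteed sign of $[Bu_{s(x)}-Dw]_i$ depends only on $[s(x)]_i$, hence only on $[x]_i$ relative to $c_i$. If $[x]_i\in[c_i,x_i^+]$ then $[s(x)]_i=-$ and the update lies in $[[x]_i-M_i,[x]_i-1]\subseteq[0,x_i^+-1]\subseteq[0,x_i^+]$; if $[x]_i\in[0,c_i)$ then $[s(x)]_i=+$ and the update lies in $[[x]_i+1,[x]_i+M_i]\subseteq[1,x_i^+)\subseteq[0,x_i^+]$. Hence $x\in X$ implies $x(t+1)\in X$ for every admissible $w$. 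The point worth flagging is that aiming at $c_i$ rather than at $0$ is exactly what stops a small positive coordinate from being driven below zero by the (possibly large) downward step.

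For robust global attractivity I would rerun the same coordinatewise bound on $\mathbb{R}^n\setminus X$. A coordinate with $[x]_i>x_i^+$ has $[x]_i>c_i$, so it is decremented by at least $1$ and at most $M_i$; since any value strictly above $c_i$ maps to a value strictly above $c_i$, it cannot overshoot below the box, and after at most $\lceil[x(0)]_i-x_i^+\rceil$ steps it lands in $(c_i,x_i^+]\subseteq[0,x_i^+]$; the case $[x]_i<0$ is symmetric. The invariance computation above simultaneously shows that a coordinate already in $[0,x_i^+]$ remains there under $\phi$, so letting $T_i$ denote the entry time of coordinate $i$ and $\tau=\max_i T_i$, every coordinate lies in its interval at time $\tau$, i.e. $x(\tau)\in X$. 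Because all the coordinatewise bounds hold for \emph{every} $w\in\mathcal{W}^p$, this $\tau$ is uniform over disturbance sequences, which is precisely what Definition~\ref{def:GA} requires.

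The step I expect to be delicate is the reconciliation of the two demands by one law: strict progress from outside the box (provided by the $*$-sets together with integrality) must coexist with non-overshoot once inside (which the naive ``push every positive coordinate down'' rule violates). The resolution is quantitative—taking $x_i^+\ge 2M_i$ so that a bounded deadbeat step can never cross an entire half-box—and this is the only place where the \emph{strict} hypothesis (\ref{Eq:SCconvergence}), rather than the weaker $\mathcal{U}_z\neq\emptyset$ of (\ref{Eq:SCexistence}), is genuinely used, via $M_i\ge 1$ and the $\ge 1$ per-step decrease that turns convergence into finite-time capture.
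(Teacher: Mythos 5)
Your construction is correct and is essentially the one in the paper: the same hyperbox of per-coordinate width twice the maximal step size, and the same bang--bang law thresholded at the centre (the paper's Lemmas \ref{Lemma:SCexistence} and \ref{Lemma:ConvergenceSet} use exactly the rule $s_i=+$ iff $[x]_i\le L_i$). The only genuine difference is how finite-time capture is concluded: the paper introduces the Lyapunov-like function $V(x)=\max_i\min_{y\in X}\bigl|[x]_i-[y]_i\bigr|$ and shows it drops by at least $\Delta=\min_{i,z,w}\bigl|[Bu_z-Dw]_i\bigr|>0$ while consecutive states are outside $X$, whereas you argue coordinate by coordinate and use integrality of $B$, $D$, $\mathcal{U}$, $\mathcal{W}$ to get a unit decrement; your route yields an explicit entry time $\tau=\max_i T_i$, while the paper's $\Delta>0$ needs only finiteness of the alphabets, not integrality. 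One sentence of yours needs repair: ``any value strictly above $c_i$ maps to a value strictly above $c_i$'' is false (a coordinate in $(c_i,x_i^+]$ can be driven down to just above $0$); what you actually need, and what is true, is that a value strictly above $x_i^+=2M_i$ is driven to a value strictly above $x_i^+-M_i=c_i\ge 0$, so the first time the coordinate drops to at most $x_i^+$ it lands in $(c_i,x_i^+]\subset[0,x_i^+]$, after which your invariance computation keeps it there. Finally, your $M_i$ is maximized over all of $\mathcal{U}^m$ rather than over the chosen representatives $\{u_z\}$ as in the paper's $L_i^*$, which makes your box possibly larger than necessary but is otherwise harmless.
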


\begin{remark}
While it is somewhat disappointing that both conditions 
$(\ref{Eq:SCexistence})$ and $(\ref{Eq:SCconvergence})$ are combinatorial in nature,
and thus grow exponentially with the number of nodes in the network,
the result is not surprising in view of the established results for the case of analog alphabets.
Indeed in \cite{JOUR:BlRiUk1997, BOOKCHAPTER:BlMPRU2001}, 
discrete-time networks subject to analog control inputs and disturbances were analyzed. 
For instance, it was shown that when
$\mathcal{W} = [d^-,d^+] $, $\mathcal{U} = [u^-,u^+]$ 
for some scalars $d^- < d^+$, $u^- < u^+$,
the condition 
\begin{displaymath}
D\mathcal{W}^p \subset int(B \mathcal{U}^m)
\end{displaymath}
is necessary and sufficient for the existence of a globally attractive robustly control invariant set.
While this condition appears deceptively simple,
verifying it is known to be NP-hard:
Indeed, it requires checking $2^n-2$ constraints \cite{JOUR:BlRiUk1997} in general.
\end{remark}

\section{Derivation of the Main Results}
\label{Sec:Derivation}

\subsection{Existence of a Robustly Control Invariant Set}
\label{SSec:Existence}

We begin by establishing a necessary condition for a given control law to 
render a set robustly control invariant.

\begin{lemma}
\label{Lemma:NCexistence}
Let $X = [0,x_1^+] \times \hdots \times [0,x_n^+]$ be robustly control invariant,
and consider a control law
$\varphi:X \rightarrow \mathcal{U}^m$ as in Definition \ref{def:RCIS}.
Then at every vertex $x \in V_{X}$, we have
\begin{displaymath}
\varphi(x) \in \mathcal{U}_{z}
\end{displaymath}
where $z$ is the unique element of $\mathbb{B}^n$ whose signature is identical to that of $x$.
\end{lemma}

\begin{proof}
Assume that $\varphi(x) \notin \mathcal{U}_{z}$ 
for some $x \in V_X$ with signature $(s_1,\hdots,s_n)$,
and consider a set $X = [0, x_1^+] \times \hdots \times [0,x_n^+]$, $x_i^+ >0$.
Pick a $u \in \mathcal{U}^m \setminus \mathcal{U}_z$.
By assumption, there exists an $i \in \{1,\hdots,n\}$ such that $u \notin \mathcal{U}_{s_i}^i$.
Now letting $x(t)=x$ and applying control input $\varphi(x(t))=u$, we have 
$ [x(t+1)]_i =[x(t) + Bu(t)-Dw(t)]_i = [x]_i +[Bu-Dw]_i$,
and $[x(t+1)]_i - [x(t) ]_i = [Bu(t)-Dw(t)]_i$ satisfies
\begin{displaymath}
[x(t+1)]_i - [x(t)]_i  >0 
\end{displaymath}
for some $w \in \mathcal{W}^p$ when $[x]_i \neq 0$
and
\begin{displaymath}
[x(t+1)]_i - [x(t)]_i  <0 
\end{displaymath}
for some $w \in \mathcal{W}^p$ when $[x]_i = 0$.
Hence $x(t+1) \notin X$ for some $w \in \mathcal{W}^p$.
Noting that the choice of $u \in \mathcal{U}^m \setminus \mathcal{U}_z$ 
was arbitrary allows us to conclude that $X$ is not robustly control invariant.
\end{proof}

\begin{lemma}
\label{Lemma:SCexistence}
If the following condition holds  
$$
\mathcal{U}_z  \neq \emptyset, \textrm{     } \forall z \in \mathbb{B}^n
\reqno{Eq:SCexistence}
$$
the set $X= [0,2L_1] \times \hdots \times [0,2L_n]$ 
is robustly control invariant for any choice $u_z \in \mathcal{U}_z$
whenever $\displaystyle L_i \geq L_i^o = \max_{w,z} \Big| [Bu_z -Dw]_i \Big|$.
\end{lemma}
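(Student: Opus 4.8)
The plan is to show that if we pick any $u_z \in \mathcal{U}_z$ for each vertex signature $z \in \mathbb{B}^n$, then the hyperbox $X = [0,2L_1] \times \hdots \times [0,2L_n]$ with $L_i \geq L_i^o$ is robustly control invariant, by explicitly constructing a control law $\varphi$ and verifying that it keeps every point of $X$ inside $X$ under all disturbances. The natural construction is to define $\varphi$ so that its value at any $x \in X$ depends only on the signature of $x$: that is, set $\varphi(x) = u_z$ where $z$ is the vertex whose signature matches that of $x$. Lemma \ref{Lemma:NCexistence} tells us this signature-based choice is forced at the vertices, so extending it to the whole box is the natural candidate.

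First I would fix an arbitrary $x \in X$ and let $(s_1,\hdots,s_n)$ be its signature, so $\varphi(x) = u_z \in \mathcal{U}_z = \mathcal{U}_{s_1}^1 \cap \hdots \cap \mathcal{U}_{s_n}^n$. The verification splits coordinate by coordinate, with two cases according to $s_i$. If $s_i = +$, then by definition of the signature $[x]_i = 0$, and since $u_z \in \mathcal{U}_+^i$ we have $[B u_z - D w]_i \geq 0$ for all $w$, so $[x(t+1)]_i = 0 + [B u_z - D w]_i \geq 0$, giving the lower bound. For the upper bound I would use the magnitude estimate: since $L_i \geq L_i^o = \max_{w,z}\bigl|[B u_z - D w]_i\bigr|$, the increment $[B u_z - Dw]_i$ is at most $L_i$, so $[x(t+1)]_i \leq 0 + L_i \leq 2L_i$. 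If instead $s_i = -$, then $0 < [x]_i \leq 2L_i$ and $u_z \in \mathcal{U}_-^i$ gives $[B u_z - Dw]_i \leq 0$, so $[x(t+1)]_i \leq [x]_i \leq 2L_i$ for the upper bound, while for the lower bound $[x(t+1)]_i = [x]_i + [B u_z - Dw]_i \geq [x]_i - L_i$.

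The main obstacle is precisely this last lower-bound estimate when $s_i = -$, because $[x]_i$ can be arbitrarily close to $0$ from above while the negative increment has magnitude up to $L_i$, so $[x]_i - L_i$ could in principle be negative and violate invariance. Resolving this is exactly what forces the box to have width $2L_i$ rather than $L_i$, but even so the naive bound $[x]_i \geq 0$ alone does not save us. The fix is to sharpen the signature-based argument: when $[x]_i$ is small the relevant behavior near the boundary must be controlled more carefully, and I expect the clean route is to argue that whenever $[x]_i \le L_i$ the coordinate cannot be driven below $0$ because the signature $s_i = -$ was only assigned when $[x]_i > 0$, whereas the worst downward step of size at most $L_i$ from a point with $[x]_i \le L_i$ still lands in $[0, 2L_i]$ provided we also verify the complementary regime $[x]_i > L_i$ separately. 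In short, I would partition each coordinate's range at the midpoint $L_i$ and check that the half-width $L_i$ absorbs the maximal increment in either direction.

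Once every coordinate is checked in both signature cases and both sub-regimes, I conclude $x(t+1) \in X$ for all $w \in \mathcal{W}^p$ and all $x \in X$, so $\varphi$ renders $X$ robustly control invariant, completing the proof. I would close by noting that the construction is fully explicit, which is the constructive feature the paper advertises.
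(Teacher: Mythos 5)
Your overall strategy (an explicit signature-indexed control law, coordinate-by-coordinate verification, box width $2L_i$) is the right one, but the control law you actually write down does not render $X$ invariant, and your attempted repair contains a false step. Defining $\varphi(x)=u_z$ where $z$ has the signature of $x$ in the paper's original sense ($s_i=+$ iff $[x]_i=0$, $s_i=-$ iff $[x]_i>0$) fails exactly where you say it does: at a point with $0<[x]_i<L_i$ you apply a control in $\mathcal{U}_-^i$, the decrement can have magnitude up to $L_i^o\leq L_i$, and nothing prevents $[x(t+1)]_i$ from going negative. Your repair then asserts that ``the worst downward step of size at most $L_i$ from a point with $[x]_i\le L_i$ still lands in $[0,2L_i]$,'' which is simply not true ($[x]_i=L_i/2$ minus a step of $L_i$ is negative), and the claim that the coordinate ``cannot be driven below $0$ because the signature $s_i=-$ was only assigned when $[x]_i>0$'' does not follow from anything.

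The missing idea --- which your closing sentence about partitioning at the midpoint gestures toward but never commits to --- is that the \emph{control law itself}, not merely the case analysis in the verification, must switch at $L_i$ rather than at $0$. The paper defines $\varphi(x)=u_{z(x)}$ where $z(x)$ has the modified signature $s_i=+$ if $[x]_i\leq L_i$ and $s_i=-$ if $[x]_i>L_i$. With that law both regimes close immediately: for $0\leq [x]_i\leq L_i$ the applied control lies in $\mathcal{U}_+^i$, so the increment lies in $[0,L_i^o]\subseteq[0,L_i]$ and $[x(t+1)]_i\in[0,2L_i]$; for $L_i<[x]_i\leq 2L_i$ the applied control lies in $\mathcal{U}_-^i$, so the increment lies in $[-L_i,0]$ and again $[x(t+1)]_i\in[0,2L_i]$. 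Note that this law still satisfies the vertex constraint of Lemma~\ref{Lemma:NCexistence} at the vertices of $X$ (a vertex with $[x]_i=0$ gets $s_i=+$ and one with $[x]_i=2L_i>L_i$ gets $s_i=-$), so your motivation survives; you just need to extend from the vertices by thresholding each coordinate at the midpoint, not at zero.
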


\begin{proof}
Assume that $\mathcal{U}_z \neq \emptyset$ holds for all $z \in \mathbb{B}^n$
and pick for each $z \in \mathbb{B}^n$ a control input $u_z \in \mathcal{U}_z$.
The set $X = [0, 2 L_1] \times \hdots \times [0, 2 L_n]$ is robustly control invariant.
Indeed, consider the control law $\varphi: X \rightarrow \mathcal{U}^m$ defined by
$\varphi(x)=u_{z(x)}$, where $z(x) \in \mathbb{B}^n$ is the unique vertex of  the unit hypercube with signature
$s_i=+$ if $[x]_i \leq L_i$ and $s_i=-$ otherwise.
Note that under this control law, we have for every $i \in \{1,\hdots,n \}$
\begin{eqnarray*}
[x(t+1)]_ i = [x(t)]_i + [B \varphi(x(t))- D w(t)]_i
\end{eqnarray*}
Thus by construction, when $0 \leq [x(t)]_i \leq L_i$,
$0 \leq [B \varphi(x(t)) - D w(t)]_i  \leq L_{i}^o \leq L_i$
and $0 \leq [x(t+1)]_i \leq 2 L_i$.
Likewise when $L_i < [x(t)]_i \leq 2L_i$,
$-L_i \leq -L_i^o \leq [B \varphi(x(t)) - D w(t)]_i  \leq 0$
and $0 \leq [x(t+1)]_i \leq 2 L_i$.
It follows that $X$ is robustly control invariant.
\end{proof}

\begin{remark}
Note that when the sufficient condition (\ref{Eq:SCexistence}) holds,
our proof effectively provides an immediate construction of a 
full state feedback control law rendering any sufficiently large hyper box robustly control invariant.
For any choice of $u_z \in \mathcal{U}_z$, $z \in \mathbb{B}^n$,
the corresponding robustly control invariant set  $X= [0,2L_1^o] \times \hdots \times [0,2L_n^o]$
can be interpreted as an outer set (or superset) of the smallest robustly control invariant set.
\end{remark}

We are now ready to prove Theorem \ref{Thm:existence}:

{\it Proof of Theorem \ref{Thm:existence}:}
(a) $\Rightarrow$ (b): 
Assume that $\mathcal{U}_z= \emptyset$ for some $z \in \mathbb{B}^n$ with signature 
$(s_1,\hdots,s_n)$,
consider a set $X = [0, x_1^+] \times \hdots \times [0,x_n^+]$, $x_i^+ >0$,
and pick vertex $x \in V_x$ with signature $(s_1,\hdots,s_n)$.
For any control law $\varphi: X \rightarrow \mathcal{U}^m$, 
we necessarily have $\varphi(x) \notin \mathcal{U}_z$ (as the latter set is empty).
Noting that the choice of control law was arbitrary allows us to conclude that $X$ 
is not robustly control invariant.
Noting that the choice of $X$ was arbitrary allows us to conclude that a robustly control invariant set cannot exist,
thus completing our argument.

(b) $\Rightarrow$ (a): Follows directly from Lemma \ref{Lemma:SCexistence} 
which provides an explicit construction for such a set.
{\hspace{\stretch{1}}} $\blacksquare$

\subsection{Robust Global Attractivity}
\label{SSec:Attractivity}

We begin by establishing sufficient conditions for a set to be robustly globally attractive.
Our proof hinges on the construction of an appropriate Lyapunov-like function.
 
\begin{lemma}
\label{Lemma:ConvergenceSet}
If the following condition holds 
$$
\mathcal{U}_z^*  \neq \emptyset, \textrm{      } z \in \mathbb{B}^n
\reqno{Eq:SCconvergence}
$$
set $X= [0,2L_1^*] \times \hdots \times [0,2L_n^*]$ with
$\displaystyle L_i^* = \max_{w,z} \Big| [Bu_z -Dw]_i \Big|$
is robustly globally attractive for any choice $u_z \in \mathcal{U}_z^*$.
\end{lemma}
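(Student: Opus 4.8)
The plan is to reuse the signature-based feedback of Lemma~\ref{Lemma:SCexistence}, now extended to all of $\mathbb{R}^n$, and to certify finite-time entry into $X$ by a Lyapunov-like function measuring the distance of the state to $X$. Concretely, fix $u_z \in \mathcal{U}_z^*$ for each $z \in \mathbb{B}^n$ and define $\psi(x) = u_{z(x)}$, where $z(x) \in \mathbb{B}^n$ has signature $s_i = +$ when $[x]_i \le L_i^*$ and $s_i = -$ when $[x]_i > L_i^*$. The point of using the strict sets $\mathcal{U}_z^*$ rather than $\mathcal{U}_z$ is that, since $B$, $D$ and the alphabets are integer-valued, a strict inequality $[Bu_z - Dw]_i > 0$ (resp. $< 0$) forces $[Bu_z - Dw]_i \ge 1$ (resp. $\le -1$) for every $w$. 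Thus each admissible control moves coordinate $i$ by at least one unit in the prescribed direction while, by the very definition of $L_i^*$, never by more than $L_i^*$ units.

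The key observation is that this feedback decouples the coordinates. For any $x$, the sign of $[Bu_{z(x)} - Dw]_i$ depends only on whether $[x]_i \le L_i^*$, and its magnitude is at most $L_i^*$, uniformly in $w$ and in the values of the other coordinates. Hence I would analyze each scalar recursion $y(t+1) = y(t) + \delta(t)$ separately, where $\delta(t) \in \{1,\dots,L_i^*\}$ whenever $y(t) \le L_i^*$ and $\delta(t) \in \{-L_i^*,\dots,-1\}$ whenever $y(t) > L_i^*$. First I would check invariance: if $y(t) \in [0,2L_i^*]$ then $y(t+1) \in [0,2L_i^*]$ (splitting into $y(t) \le L_i^*$ and $y(t) > L_i^*$ and using $|\delta| \le L_i^*$), which recovers the invariance already guaranteed by Lemma~\ref{Lemma:SCexistence} since $\mathcal{U}_z^* \subseteq \mathcal{U}_z$. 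Next I would show attractivity of the scalar interval: if $y(t) > 2L_i^*$ then $y(t)$ strictly decreases by at least one unit yet stays above $L_i^*$ (so it cannot overshoot below $0$), and symmetrically if $y(t) < 0$ it increases by at least one unit while staying below $L_i^*$. Because the decrements are at least one unit and uniform in $w$, coordinate $i$ enters $[0,2L_i^*]$ after at most $\lceil |[x(0)]_i - L_i^*| \rceil$ steps and remains there thereafter.

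With the scalar claims in hand, the Lyapunov-like function is simply the $\ell_1$ distance to the box, $V(x) = \sum_{i=1}^n \max\{[x]_i - 2L_i^*,\, -[x]_i,\, 0\}$; it is nonincreasing along every trajectory and, while $x \notin X$, at least one coordinate lies outside its interval and drives $V$ strictly down. Taking $\tau = \max_i \lceil |[x(0)]_i - L_i^*| \rceil$, every coordinate has entered its interval by time $\tau$ and stays, so $x(\tau) \in X$ for every disturbance sequence, which is exactly robust global attractivity. I expect the main obstacle to be the bookkeeping around the terminal step for each coordinate: when a coordinate sits strictly between a box boundary and the nearest integer-reachable point, its distance can drop by less than one unit in the step that brings it into $X$, so the finite-time bound must be argued from the integrality of the increments (each coordinate makes monotone, unit-or-larger progress on its own side of $L_i^*$) rather than from a uniform per-step decrease of $V$ alone. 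The other point requiring care is justifying the decoupling rigorously, namely that the single vector $u_{z(x)}$ simultaneously enforces the correct sign in every coordinate regardless of the other coordinates' positions; this is precisely what the hypothesis $\mathcal{U}_z^* \neq \emptyset$ for all $z \in \mathbb{B}^n$ supplies.
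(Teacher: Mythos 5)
Your proposal is correct and follows essentially the same route as the paper's proof: the same signature-based feedback $u_{z(x)}$ with $u_z \in \mathcal{U}_z^*$, the same coordinate-by-coordinate case analysis showing each component makes bounded, sign-correct progress toward its interval without overshooting, and a distance-to-the-box Lyapunov-like function with a uniform positive per-step decrease to certify finite-time entry. The only (cosmetic) differences are that you use the $\ell_1$ rather than $\ell_\infty$ distance and obtain the lower bound on progress from integrality ($\ge 1$) where the paper takes $\Delta = \min_{i,z,w}|[Bu_z - Dw]_i| > 0$ from finiteness of the alphabets.
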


\begin{proof}
Pick a choice $u_z \in \mathcal{U}_z^*$, for $z \in \mathbb{B}^n$.
To prove global attractiveness of the corresponding set $X$, 
consider the control law $\zeta: \mathbb{R}^n \setminus X \rightarrow \mathcal{U}^m$ defined by
$\zeta(x)=u_{z(x)}$, where $z(x) \in \mathbb{B}^n$ is the unique vertex of  the unit hypercube with signature
$s_i=+$ if $[x]_i \leq L_i$ and $s_i=-$ otherwise.
Letting
$ \displaystyle \Delta = \min_{i,z,w} \Big| [Bu_z - D w]_i \Big|$, 
note that $\Delta >0$.
Consider the function $V: \mathbb{R}^n \rightarrow \mathbb{R}$ defined by
\begin{displaymath}
V(x) = \max_i \min_{y \in X} \Big| [x]_i - [y]_i \Big|.
\end{displaymath}
Note that by construction, we have
$V(x) \geq 0$, for all $x \in \mathbb{R}^n$ and $V(x) = 0 \Leftrightarrow x \in X$.
Moreover, observe that under control law $\zeta$ we have 
\begin{equation}
\label{Eq:Lyapunov}
V(x(t+1)) - V (x(t)) <0
\end{equation}
along system trajectories whenever $x(t) \in \mathbb{R}^n \setminus X$.
To verify this, consider any $x(t) \in \mathbb{R}^n \setminus X$,
and consider the $i^{th}$ coordinate direction.
We have:
\begin{equation}
\label{Eq:Alongi}
\min_{y \in X} \Big| [x(t+1)]_i - [y]_i \Big|  \leq \min_{y \in X} \Big| [x(t)]_i - [y]_i \Big|
\end{equation}
with equality holding {\it only} in the case where the right hand side is {\it zero}.
Indeed, we can distinguish three cases:
\begin{itemize}
\item $[x(t)]_i > 2 L_i^*$: In this case, by construction we have $0 < [x(t+1)]_i < [x(t)]_i$,
and
\begin{eqnarray*}
\max\{0, [x(t+1)]_i - 2L_i^* \} & = & \min_{y \in X} \Big| [x(t+1)]_i - [y]_i \Big| \\
& < & \min_{y \in X} \Big| [x(t)]_i - [y]_i \Big| \\
& = & [x(t)]_i - 2L_i^*
\end{eqnarray*}

\item $0 \leq [x(t)]_i \leq 2 L_i^*$: In this case, by construction we have $0 \leq [x(t+1)]_i \leq 2 L_i^*$,
and
\begin{displaymath}
\min_{y \in X} \Big| [x(t+1)]_i - [y]_i \Big| = \min_{y \in X} \Big| [x(t)]_i - [y]_i \Big| = 0
\end{displaymath}

\item $[x(t)]_i < 0$: In this case, by construction we have $[x(t)]_i < [x(t+1)]_i < 2L_i^*$,
and
\begin{eqnarray*}
\max\{0, - [x(t+1)]_i  \} & = & \min_{y \in X} \Big| [x(t+1)]_i - [y]_i \Big| \\
& < & \min_{y \in X} \Big| [x(t)]_i - [y]_i \Big| \\
& = & - [x(t)]_i 
\end{eqnarray*}
\end{itemize}
Equation (\ref{Eq:Lyapunov}) thus follows from the the fact that (\ref{Eq:Alongi}) holds for each $i \in \{1,\hdots,n\}$,
with equality only when both terms are identically 0.

Moreover, we have 
\begin{equation}
\label{Eq:BoundLyapunov}
V(x(t+1)) \leq V(x(t)) - \Delta
\end{equation}
whenever $x(t) \in \mathbb{R}^n \setminus X$ and $x(t+1) \in \mathbb{R}^n \setminus X$.

Thus, for any choice of initial condition $x(0)$ and of disturbance input $w : \mathbb{Z}_+ \rightarrow \mathcal{W}^p$, 
we conclude from (\ref{Eq:Lyapunov}) that $\displaystyle \lim_{t \rightarrow \infty}V(x(t)) \rightarrow 0$.
Moreover, we conclude from (\ref{Eq:BoundLyapunov}) that
there must exist a $\tau > 0$ such that $V(x(\tau)) =0$, or equivalently $x(\tau) \in X$. 
Hence $X$ is globally attractive. 
\end{proof}

We are now ready to prove Theorem \ref{Thm:ConvergenceSufficiency}:

{\it Proof of Theorem \ref{Thm:ConvergenceSufficiency}:}
The proof is constructive. Assume that (\ref{Eq:SCconvergence}) holds 
and pick a choice $u_z \in \mathcal{U}_z^*$, for $z \in \mathbb{B}^n$.
The set $X = [0, 2 L_1^*] \times \hdots \times [0, 2 L_n^*]$ is robustly control invariant
by Lemma \ref{Lemma:SCexistence}, since $\mathcal{U}_{z}^* \subseteq \mathcal{U}_z$.
Moreover, $X$ is globally attractive by Lemma \ref{Lemma:ConvergenceSet}.
{\hspace{\stretch{1}}} $\blacksquare$

In the case of a degenerate network consisting of a single node
(i.e. when $n=1$, $p$ and $m$ arbitrary),
condition (\ref{Eq:SCconvergence}) is necessary as well as sufficient for robust global attractivity:

\begin{proposition}
\label{Proposition:NSCscalar}
When $n=1$, if $\mathcal{U}_z^* = \emptyset$ for some $z\in \{0,1\}$,
there cannot exist a set $X=[0,L]$ that is robustly globally attractive.
\end{proposition}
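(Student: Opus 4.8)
The plan is to prove the statement directly by an adversarial construction: assuming $\mathcal{U}_z^* = \emptyset$ for one of the two vertices $z \in \{0,1\}$, I will exhibit an initial condition together with a disturbance sequence that defeats \emph{every} candidate control law $\psi$, so that no interval $X = [0,L]$ can satisfy Definition \ref{def:GA}.

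First I would specialize the alphabet sets to $n=1$ and translate the hypothesis into a usable scalar inequality. Writing the increment as $\delta(u,w) = [Bu - Dw]_1$, the vertex $z=0$ has signature $+$ so $\mathcal{U}_0^* = \mathcal{U}_{+*}^1$, while $z=1$ has signature $-$ so $\mathcal{U}_1^* = \mathcal{U}_{-*}^1$. By definition, emptiness of $\mathcal{U}_{+*}^1$ means exactly that \emph{for every} $u \in \mathcal{U}^m$ there is some $w \in \mathcal{W}^p$ with $\delta(u,w) \leq 0$; symmetrically, emptiness of $\mathcal{U}_{-*}^1$ means that for every $u$ there is some $w$ with $\delta(u,w) \geq 0$.

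Next, in the case $\mathcal{U}_{+*}^1 = \emptyset$, I would start the trajectory at any $x(0) < 0$, which lies in $\mathbb{R} \setminus [0,L]$ so that $\psi$ is defined there. At each step, once the control value $u(t) = \psi(x(t))$ has been chosen, the adversary selects $w(t)$ achieving $\delta(u(t),w(t)) \leq 0$, giving $x(t+1) = x(t) + \delta(u(t),w(t)) \leq x(t)$. By induction the trajectory is non-increasing, hence stays $\leq x(0) < 0$ for all $t$; it is never in $[0,L]$, so $\psi$ remains applicable at every step and attractivity fails for \emph{this} $\psi$. Since $\psi$ was arbitrary, no control law works. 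The case $\mathcal{U}_{-*}^1 = \emptyset$ is the mirror image: start at $x(0) > L$, let the adversary keep $\delta(u(t),w(t)) \geq 0$, and the trajectory is non-decreasing and stays $> L$ forever.

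The argument carries no real computational difficulty; the only points requiring care, and where I would focus, are twofold. First, the adversarial disturbance is chosen \emph{after} the control value is revealed at each step, which is legitimate precisely because Definition \ref{def:GA} fixes $\psi$ and then quantifies over all disturbance sequences. Second, the \emph{non-strict} inequality delivered by negating the strict condition (e.g. $\delta \leq 0$ rather than $\delta < 0$) is already enough: a trajectory that is merely non-increasing from a \emph{strictly} negative start can never cross into $[0,L]$. I would close by noting that this is exactly what isolates the strict sets $\mathcal{U}_z^*$, rather than the non-strict $\mathcal{U}_z$ governing mere invariance in Theorem \ref{Thm:existence}, as the correct objects for attractivity when $n=1$, so that together with Lemma \ref{Lemma:ConvergenceSet} the condition (\ref{Eq:SCconvergence}) becomes necessary as well as sufficient in the scalar case.
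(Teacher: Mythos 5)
Your proposal is correct and follows essentially the same route as the paper's own proof: negate the strict condition to obtain, for every control value, a disturbance making the one-step increment non-positive (resp.\ non-negative), then construct an adversarial disturbance sequence step by step so that a trajectory started outside $[0,L]$ never enters it. If anything, your write-up is slightly more careful than the paper's, which treats only the case $\mathcal{U}_1^* = \emptyset$ ``without loss of generality'' and glosses over the fact that the defeating disturbance value may change from step to step as the applied control changes, a point you address explicitly.
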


\begin{proof}
Assume, without loss of generality, that $\mathcal{U}_1^* = \emptyset$
and consider a set $X=[0,L]$.
It follows that for any $x(t) >L$ and any choice of control law, 
$x(t+1) \geq x(t)$ for some $w(t) \in \mathcal{W}^p$,
call this $w_o$.
We can thus always find a disturbance input, 
namely $w(t)=w_o$, $t \geq 0$, for which any state trajectory initialized to the right of the interval $[0,L]$
will remain to the right of the interval for all times,
and $X$ is not robustly control invariant.
\end{proof}

In general, however, 
condition (\ref{Eq:SCconvergence}) is {\it not necessary} for global attractivity.
Indeed, consider the second order counterexample constructed as follows:

\begin{counterex}
\label{CounterEx:NonNecessaryConvergence}
Let $n=2$, $m=2$, $p=1$,
$\mathcal{W}=\{ 0 \}$, $\mathcal{U}=\{-1,3\}$, and
$\displaystyle B = \left[ \begin{array}{cc}
1 & 3 \\
1 & -1
\end{array} \right]$.
In this case, we have four possible control inputs,
\begin{eqnarray*}
\mathcal{U}^2 & = & \Big\{
u_1 = \left[ \begin{array}{c}
-1 \\ -1
\end{array} \right],
u_2=\left[ \begin{array}{c}
-1 \\ 3
\end{array} \right],
u_3= \left[ \begin{array}{c}
3 \\ -1
\end{array} \right],
u_4=\left[ \begin{array}{c}
3 \\ 3
\end{array} \right]
\Big\}
\end{eqnarray*}
Computing the relevant sets, we get
\begin{eqnarray*}
\begin{array}{l l l l}
\mathcal{U}_-^1= \{u_1,u_3\}, \quad \quad & 
\mathcal{U}_{-*}^1= \{u_1\}, \quad \quad &
\mathcal{U}_+^1 = \{ u_2,u_3,u_4\}, \quad \quad & 
\mathcal{U}_{+*}^1= \{u_2,u_4\},\\
\mathcal{U}_-^2= \{u_1,u_2,u_4\}, & 
\mathcal{U}_{-*}^2 = \{u_2\}, &
\mathcal{U}_+^2 = \{u_1,u_3,u_4 \}, & 
\mathcal{U}_{+*}^2 =\{ u_3 \}.\\
\end{array}
\end{eqnarray*}
On the unit hypercube, we have 
\begin{eqnarray*}
\begin{array}{ll}
\mathcal{U}_{[0,0]'} = \{u_3,u_4\}, \quad \quad \quad & 
\mathcal{U}_{[0,0]'}^* = \emptyset, \\
\mathcal{U}_{[0,1]'} = \{u_2, u_4\}, \quad \quad \quad & 
\mathcal{U}_{[0,1]'}^* = \{u_2\}, \\
\mathcal{U}_{[1,0]'} = \{u_1,u_3\}, \quad \quad \quad & 
\mathcal{U}_{[1,0]'}^* = \emptyset, \\
\mathcal{U}_{[1,1]'} = \{u_1\}, \quad \quad \quad & 
\mathcal{U}_{[1,1]'}^* = \emptyset. \\
\end{array}
\end{eqnarray*}
By Lemma \ref{Lemma:SCexistence}, 
there exists a set that is robustly control invariant, namely
$X=[0,24] \times [0,8]$.
While condition (\ref{Eq:SCconvergence}) does not hold, 
it is easy to note that $X$ is also robustly globally attractive.
Indeed, consider the control law $\varphi: \mathbb{R}^2 \rightarrow \mathcal{U}^2$ defined by
\begin{displaymath}
\varphi(x) =
\left\{ \begin{array}{cc}
u_1 & \textrm{     when     } [x]_1 \geq 12, 0 \leq [x]_2 \leq 8 \\
u_4 & \textrm{     when     } [x]_1 < 12, 0 \leq [x]_2 \leq 8 \\
u_3 & \textrm{     when     } [x]_2 < 0 \\
u_2 & \textrm{     when     } [x]_2 > 8 \\
\end{array} \right.
\end{displaymath}
It is straightforward to verify that $\varphi$ renders $X$ 
robustly control invariant and globally attractive.
\end{counterex}

\section{Discussion}
\label{Sec:Discussion}

In this Section, we establish connections between our results and existing results,
focusing in particular on certain set inclusion conditions 
that often appear in the literature on set invariance in control.

\subsection{Connections to Set Inclusion Conditions on the Alphabet Sets}

The necessary and sufficient condition for existence 
of a robustly control invariant set,
as well as the sufficient condition for global attractivity 
are both formulated in terms of combinatorial conditions. 
In contrast,
conditions derived in the literature for the existence of robustly control invariant sets for 
discrete-time dynamic network flow models with analog-valued inputs are typically 
presented as set inclusion conditions. 
Specifically in \cite{JOUR:BlRiUk1997}
the authors prove that 
\begin{equation}
\label{Eq:AnalogNSC}
B \mathcal U^m \supseteq D \mathcal W^p
\end{equation}
is necessary and sufficient for a robustly control invariant set to exist. 
In light of this, in this section we attempt to connect our combinatorial conditions with
appropriate set inclusion conditions.
In particular, we show that (\ref{Eq:SCexistence}) implies another condition, 
formulated in terms of the convex hull of $B \mathcal U^m$ and $D \mathcal W^p$. 
Likewise, we show that (\ref{Eq:SCconvergence}) implies another condition, 
formulated in terms of the interior of the convex hull of $B \mathcal U^m$ 
and the convex hull of $D \mathcal W^p$. 

However, it should be emphasized that this exercise is mainly academic 
for two reasons:
First, set inclusion conditions (\ref{Eq:AnalogNSC}) are known to be NP-hard to verify in general.
Indeed, verifying the condition
when $\mathcal{U}$ and $\mathcal{W}$ are closed convex sets 
requires checking $2^n-2$ constraints in general,
where $n$ is the dimension of the underlying state-space.
This characterization, which relies on an earlier result in \cite{JOUR:McCorm1996},
is stated and proved in Section 4 of \cite{JOUR:BlRiUk1997}.
As such, set inclusion conditions do not offer much promise of a substantial reduction in computational burden.
Second, the directions of the implications are such that a set inclusion
condition can only be used to conclude the non-existence of a robustly control invariant set
in the case where the condition is violated.

\vspace*{10pt}
\begin{lemma}
\label{Lemma:EquivalentSCexistence}
Condition (\ref{Eq:SCexistence}) holds iff
for each closed orthant $\overline{O}^j$, $j=1,\hdots,2^n$,
there exists a control input $u^j \in \mathcal{U}^m$ such that the set
\begin{displaymath}
Bu^j - D\mathcal{W}^p = \{x \in \mathbb{R}^n | x = Bu^j - Dw \textrm{   for some   } w \in \mathcal{W}^p\}
\end{displaymath}
satisfies $Bu^j - D \mathcal{W}^p \subset \overline{O}^j$.
\end{lemma}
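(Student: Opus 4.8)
The plan is to establish both directions of the equivalence by connecting the sign conditions defining $\mathcal{U}_z$ to the orthant containment condition, exploiting a natural bijection between vertices $z \in \mathbb{B}^n$ and closed orthants $\overline{O}^j$. The key observation is that each closed orthant of $\mathbb{R}^n$ is specified by a choice of sign ($\geq 0$ or $\leq 0$) for each coordinate, and this choice corresponds exactly to the signature $(s_1, \ldots, s_n)$ used to define $\mathcal{U}_z$. Concretely, given an orthant $\overline{O}^j$, I would associate to it the vertex $z \in \mathbb{B}^n$ whose signature has $s_i = +$ precisely when $\overline{O}^j$ imposes $[x]_i \geq 0$, and $s_i = -$ when it imposes $[x]_i \leq 0$. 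This makes the correspondence between the $2^n$ orthants and the $2^n$ vertices explicit and exhaustive.

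With this bijection in hand, the heart of the argument is to rewrite the orthant containment $Bu - D\mathcal{W}^p \subset \overline{O}^j$ coordinate by coordinate. The point $Bu - Dw$ lies in $\overline{O}^j$ for \emph{all} $w \in \mathcal{W}^p$ if and only if, for each coordinate $i$, the sign constraint of that orthant is met for all disturbances. For an orthant coordinate demanding $[x]_i \geq 0$, this reads $[Bu - Dw]_i \geq 0$ for all $w \in \mathcal{W}^p$, which is precisely the defining condition of $\mathcal{U}_+^i$; for a coordinate demanding $[x]_i \leq 0$, it reads $[Bu - Dw]_i \leq 0$ for all $w$, matching $\mathcal{U}_-^i$. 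Intersecting over all coordinates yields exactly $\mathcal{U}_z = \mathcal{U}_{s_1}^1 \cap \cdots \cap \mathcal{U}_{s_n}^n$. Thus $u \in \mathcal{U}_z$ if and only if $Bu - D\mathcal{W}^p \subset \overline{O}^j$ for the orthant $\overline{O}^j$ paired with $z$.

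The two directions then follow almost immediately. For the forward direction, assume (\ref{Eq:SCexistence}) holds: for every $z$, $\mathcal{U}_z \neq \emptyset$, so I may pick $u^j \in \mathcal{U}_z$ for the paired orthant, and by the coordinatewise equivalence this $u^j$ satisfies $Bu^j - D\mathcal{W}^p \subset \overline{O}^j$. Since the correspondence is a bijection onto all $2^n$ orthants, every orthant is covered. Conversely, if for each orthant there is a suitable $u^j$, then for each $z$ the corresponding $u^j$ lies in $\mathcal{U}_z$, so $\mathcal{U}_z \neq \emptyset$ for all $z$, giving (\ref{Eq:SCexistence}).

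I do not anticipate a serious obstacle here; the result is essentially a reindexing that translates the combinatorial emptiness condition into geometric language. The one point requiring care is the bookkeeping of the sign convention: I must fix the orientation of the bijection (which sign of $[x]_i$ corresponds to $s_i = +$ versus $s_i = -$) consistently and verify it matches the signature convention in the definition of $\mathcal{U}_z$, where $s_i = +$ is associated with the $\geq 0$ (lower-boundary) behavior. A secondary subtlety is confirming that the quantifier ``$\forall w \in \mathcal{W}^p$'' in the set definitions aligns with requiring the \emph{entire} image set $Bu^j - D\mathcal{W}^p$ to lie in the orthant, rather than merely some point of it; this is immediate once the definitions are unwound but should be stated explicitly to avoid ambiguity.
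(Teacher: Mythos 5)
Your proposal is correct and follows essentially the same route as the paper, which associates each vertex $z\in\mathbb{B}^n$ with the unique orthant containing $\tfrac{1}{2}\mathbf{1}-z$ and then unwinds the definitions coordinatewise; your version simply spells out the sign bookkeeping that the paper leaves implicit.
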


\begin{proof}
Follows from the definitions by noting that each vertex $z$ of $\mathbb{B}^n$ 
can be uniquely associated with an orthant in $\mathbb{R}^n$,
namely the unique orthant containing $1/2 \cdot \mathbf{1} - z$.
\end{proof}

\vspace*{10pt}
\begin{lemma}
\label{Lemma:EquivalentSCconvergence}
Condition (\ref{Eq:SCconvergence}) holds iff
for each open orthant $O^j$, $j=1,\hdots,2^n$,
there exists a control input $u^j \in \mathcal{U}^m$ such that the set
\begin{displaymath}
Bu^j - D\mathcal{W}^p = \{x \in \mathbb{R}^n | x = Bu^j - Dw \textrm{   for some   } w \in \mathcal{W}^p\}
\end{displaymath}
satisfies $Bu^j - D \mathcal{W}^p \subset O^j$.
\end{lemma}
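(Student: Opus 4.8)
The plan is to mirror the proof of Lemma~\ref{Lemma:EquivalentSCexistence}, replacing its closed orthants and non-strict sign conditions with open orthants and strict sign conditions. The entire statement should reduce to unfolding the definitions of $\mathcal{U}_z^*$ and of the open orthants, combined with the same vertex-to-orthant bijection used there.

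First I would fix the correspondence between the vertices of $\mathbb{B}^n$ and the open orthants. As in Lemma~\ref{Lemma:EquivalentSCexistence}, I associate to each $z \in \mathbb{B}^n$ the unique open orthant $O^j$ containing $\tfrac12\mathbf{1} - z$. Since $[\tfrac12\mathbf{1}-z]_i = \tfrac12 - [z]_i$ is positive exactly when $[z]_i = 0$ and negative exactly when $[z]_i = 1$, this orthant is $O^j = \{\, x \in \mathbb{R}^n : [x]_i > 0 \text{ if } [z]_i = 0,\ [x]_i < 0 \text{ if } [z]_i = 1 \,\}$. This map is a bijection from the $2^n$ vertices onto the $2^n$ open orthants, so quantifying over all $z$ is the same as quantifying over all $j$.

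Next I would translate membership in $\mathcal{U}_z^*$ into a containment statement. The signature of $z$ has $s_i = +$ precisely when $[z]_i = 0$ and $s_i = -$ precisely when $[z]_i = 1$, so that $\mathcal{U}_z^* = \mathcal{U}_{s_1*}^1 \cap \hdots \cap \mathcal{U}_{s_n*}^n$. By the definitions of $\mathcal{U}_{+*}^i$ and $\mathcal{U}_{-*}^i$, a control $u \in \mathcal{U}^m$ lies in $\mathcal{U}_z^*$ iff for every $w \in \mathcal{W}^p$ one has $[Bu - Dw]_i > 0$ whenever $[z]_i = 0$ and $[Bu - Dw]_i < 0$ whenever $[z]_i = 1$. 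Comparing with the description of $O^j$ above, this is exactly the statement that $Bu - Dw \in O^j$ for every $w \in \mathcal{W}^p$, i.e. $Bu - D\mathcal{W}^p \subset O^j$. Hence $\mathcal{U}_z^* \neq \emptyset$ iff some $u^j$ satisfies $Bu^j - D\mathcal{W}^p \subset O^j$, and ranging over all $z$ (equivalently all $j$) yields the claimed equivalence.

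The argument is almost entirely a definitional unfolding, so the main thing to get right — rather than a genuine obstacle — is the bookkeeping of sign conventions: I must check that the signature convention ($+ \leftrightarrow$ zero coordinate) matches the direction assigned to $O^j$ by $\tfrac12\mathbf{1}-z$, and that the universal quantifier ``for all $w$'' in the definitions of $\mathcal{U}_{+*}^i$ and $\mathcal{U}_{-*}^i$ corresponds to containment of the \emph{entire} image set $Bu^j - D\mathcal{W}^p$ in $O^j$, rather than merely a nonempty intersection. Because the orthants here are open and the defining inequalities strict, there is no boundary case to worry about, which is the only respect in which this differs from Lemma~\ref{Lemma:EquivalentSCexistence}.
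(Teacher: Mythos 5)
Your proposal is correct and follows exactly the paper's approach: the paper's own proof is the one-line observation that each vertex $z\in\mathbb{B}^n$ corresponds to the unique orthant containing $\tfrac12\mathbf{1}-z$, and your write-up simply fills in the definitional unfolding (with the sign conventions matched correctly). Nothing is missing or different in substance.
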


\begin{proof}
Follows from the definitions by noting that each vertex $z$ of $\mathbb{B}^n$ 
can be uniquely associated with an orthant in $\mathbb{R}^n$,
namely the unique orthant containing $1/2 \cdot \mathbf{1} - z$.
\end{proof}

\vspace*{10pt}
\begin{theorem}
Condition (\ref{Eq:SCexistence})
implies
\begin{equation}
\label{hull}  
hull\{B \mathcal U^m\} \supseteq hull \{D \mathcal W^p\}.
\end{equation}
The converse statement also holds when $n=1$.
\end{theorem}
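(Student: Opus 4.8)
The plan is to route the forward implication through the orthant reformulation supplied by Lemma \ref{Lemma:EquivalentSCexistence}, and then to isolate a purely geometric ``surrounding'' fact: a point that is flanked, in every closed orthant direction, by a point of a finite set lying in the corresponding orthant translate must lie in the convex hull of that finite set. First I would invoke Lemma \ref{Lemma:EquivalentSCexistence} to restate (\ref{Eq:SCexistence}) as follows: for each of the $2^n$ closed orthants $\overline{O}^j$ there is a control $u^j \in \mathcal{U}^m$ with $Bu^j - D\mathcal{W}^p \subset \overline{O}^j$. Since $hull\{B\mathcal{U}^m\}$ is convex, the desired inclusion $hull\{B\mathcal{U}^m\} \supseteq hull\{D\mathcal{W}^p\}$ reduces to showing that every point $Dw$, $w \in \mathcal{W}^p$, lies in $hull\{B\mathcal{U}^m\}$; in fact it suffices to show $Dw \in hull\{Bu^1,\ldots,Bu^{2^n}\}$, as the latter set is contained in $B\mathcal{U}^m$.

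Second, I would prove the surrounding fact by contraposition using a separating hyperplane. Fix $w$ and write $q = Dw$ and $p^j = Bu^j$. If $q \notin hull\{p^1,\ldots,p^{2^n}\}$, then because the convex hull of finitely many points is a compact polytope, $q$ can be \emph{strictly} separated from it: there is $a \in \mathbb{R}^n \setminus \{0\}$ with $\langle a, p^j - q\rangle > 0$ for every $j$. Now select the closed orthant $\overline{O}^{j^*}$ whose sign pattern is opposite to that of $a$, namely the orthant of vectors $v$ with $v_i \leq 0$ whenever $a_i > 0$ and $v_i \geq 0$ whenever $a_i \leq 0$, so that $\langle a, v\rangle = \sum_i a_i v_i \leq 0$ for every $v \in \overline{O}^{j^*}$. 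For the corresponding control $u^{j^*}$ we have $Bu^{j^*} - Dw \in \overline{O}^{j^*}$, whence $\langle a, p^{j^*} - q\rangle \leq 0$, contradicting the separation inequality. This forces $q = Dw \in hull\{p^j\}$ and, combined with convexity, completes the forward implication.

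For the converse when $n = 1$, I would argue by direct computation. Here $B\mathcal{U}^m$ and $D\mathcal{W}^p$ are finite subsets of $\mathbb{R}$, so their convex hulls are the intervals $[\min_u Bu, \max_u Bu]$ and $[\min_w Dw, \max_w Dw]$. Unwinding the definitions, $\mathcal{U}_0 = \mathcal{U}_+^1 = \{u \mid Bu \geq \max_w Dw\}$ and $\mathcal{U}_1 = \mathcal{U}_-^1 = \{u \mid Bu \leq \min_w Dw\}$, so (\ref{Eq:SCexistence}) is equivalent to the pair of inequalities $\max_u Bu \geq \max_w Dw$ and $\min_u Bu \leq \min_w Dw$, i.e.\ to the interval inclusion $hull\{D\mathcal{W}^p\} \subseteq hull\{B\mathcal{U}^m\}$. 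Thus (\ref{hull}) and (\ref{Eq:SCexistence}) coincide in the scalar case.

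The main obstacle is the geometric surrounding fact in the second step. The delicate points there are securing strict rather than merely weak separation, which is available precisely because the hull of finitely many points is compact, and handling the degenerate coordinates with $a_i = 0$, which cause no trouble since those terms vanish in $\langle a, v\rangle$ regardless of the sign chosen for the corresponding orthant coordinate. I would also note, without pursuing it, that the converse cannot be expected for $n \geq 2$: the orthant-containment requirement is strictly stronger than mere membership of each $Dw$ in $hull\{B\mathcal{U}^m\}$, since the latter permits the surrounding controls to be shared across orthants in ways the former forbids.
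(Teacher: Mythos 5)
Your proposal is correct and takes essentially the same route as the paper: it reduces via Lemma~\ref{Lemma:EquivalentSCexistence} to the orthant containments $Bu^j - D\mathcal{W}^p \subset \overline{O}^j$, concludes $Dw \in hull\{Bu^1,\ldots,Bu^{2^n}\} \subseteq hull\{B\,\mathcal{U}^m\}$ for each $w$, and verifies the $n=1$ converse by reducing both sides to interval inclusions. The only difference is that you supply an explicit separating-hyperplane justification of the ``surrounding'' step ($0 \in hull\{Bu^j - Dw\}$), which the paper's chain of implications asserts without proof.
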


\begin{proof}
When (\ref{Eq:SCexistence}) holds, by Lemma \ref{Lemma:EquivalentSCexistence} we have that for each closed orthant $\overline{O}^j$, there exists a $u^j \in \mathcal{U}^m$
such that $B u^j - D \mathcal{W}^p \subset \overline{O}^j$.
Let $\mathbf{U} = \{u^1,\hdots, u^{2^n} \}$.
Pick any $w \in \mathcal{W}^p$. We have (with some abuse of notation)
\begin{eqnarray*}
& B u^j - D \mathcal{W}^p \subset \overline{O}^j \textrm{    for   } j=1,\hdots, 2^n \\
\Rightarrow & 0 \in hull \{B u^j - Dw | u^j \in \mathbf{U} \} \\
\Leftrightarrow & 0 \in hull \{ B \mathbf{U} - Dw \} \\
\Rightarrow & 0 \in hull \{ B \mathcal{U}^m - Dw \} \\
\Leftrightarrow & 0 \in hull \{ B \mathcal{U}^m \} - Dw \\
\Leftrightarrow & \exists x \in hull\{ B \mathcal{U}^m \} \textrm{   such that   } x=Dw
\end{eqnarray*}
Since the choice of $w$ was arbitrary in $\mathcal{W}^p$, we conclude that for any $w \in \mathcal{W}^p$ there exists $x \in hull \{ B \mathcal{U}^m \}$ such that $x =Dw$.
Hence $D \mathcal{W}^p \subset hull \{ B \mathcal{U}^m \}$, and 
$hull \{ D \mathcal{W}^p \} \subseteq hull \{ B \mathcal{U}^m \}$.

When $n=1$, both implications in the above derivation become equivalences
(the second by picking $\mathbf{U}$ to correspond to extremal points),
and the converse statement holds.
\end{proof}

\vspace*{10pt}
\begin{theorem}
\label{leminthull}
Condition (\ref{Eq:SCconvergence})
implies 
\begin{equation}
\label{inthull} 
int ( hull\{B \mathcal U^m\}) \supset hull \{D \mathcal W^p\}.
\end{equation}
The converse statement also holds when $n=1$.
\end{theorem}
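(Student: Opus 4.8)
The plan is to mirror the proof of the previous theorem (the one establishing (\ref{hull})), replacing the closed-orthant characterization by its strict counterpart and carrying the whole argument out with interiors in place of sets. Concretely, I would begin from Lemma \ref{Lemma:EquivalentSCconvergence}: assuming (\ref{Eq:SCconvergence}), for each open orthant $O^j$, $j=1,\ldots,2^n$, there is a control input $u^j \in \mathcal{U}^m$ with $Bu^j - D\mathcal{W}^p \subset O^j$. I would collect these into $\mathbf{U} = \{u^1,\ldots,u^{2^n}\}$ and fix an arbitrary $w \in \mathcal{W}^p$. Then for each $j$ the point $Bu^j - Dw$ lies strictly inside the open orthant $O^j$, so I obtain exactly one point strictly inside each open orthant.

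The crux, and the one step that genuinely differs from the earlier proof, is the geometric fact that having one point strictly inside each of the $2^n$ open orthants forces the origin into the interior (not merely the closure) of the convex hull of those points. I would establish this through the standard separation characterization: $0 \in int(hull\{q_1,\ldots,q_{2^n}\})$ iff for every nonzero direction $c \in \mathbb{R}^n$ there is a point $q_j$ with $\langle c, q_j \rangle > 0$, equivalently iff no supporting hyperplane passes through the origin. Given $c \neq 0$, I would select the open orthant whose sign pattern matches the signs of the nonzero components of $c$; the point $q_j$ sitting inside it satisfies $c_i [q_j]_i > 0$ for every $i$ with $c_i \neq 0$ and $c_i[q_j]_i = 0$ otherwise, so $\langle c, q_j\rangle = \sum_i c_i [q_j]_i > 0$. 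Since $c$ was arbitrary, the origin admits no supporting hyperplane and hence lies in the interior. This is precisely where the strictness of (\ref{Eq:SCconvergence}) is used: in the non-strict theorem the points were only guaranteed in the \emph{closed} orthants, which yields $0 \in hull\{\cdot\}$ but not interior membership.

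With this geometric fact in hand I would translate it back exactly as in the previous theorem: from $0 \in int(hull\{Bu^j - Dw\})$ I get $0 \in int(hull\{B\mathbf{U}\}) - Dw \subseteq int(hull\{B\mathcal{U}^m\}) - Dw$, using $\mathbf{U} \subseteq \mathcal{U}^m$ together with the facts that translation preserves interior points and that interior is monotone under inclusion. Hence $Dw \in int(hull\{B\mathcal{U}^m\})$, and since $w$ was arbitrary, $D\mathcal{W}^p \subseteq int(hull\{B\mathcal{U}^m\})$. Because the interior of a convex set is itself convex, it must contain the convex hull of $D\mathcal{W}^p$, which gives (\ref{inthull}). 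For the converse when $n=1$, I would note that $hull\{B\mathcal{U}^m\}$ and $hull\{D\mathcal{W}^p\}$ reduce to closed intervals $[\alpha,\beta]$ and $[\gamma,\delta]$; condition (\ref{inthull}) becomes $\alpha < \gamma \leq \delta < \beta$, which says exactly that some $Bu$ lies strictly below $\min_w Dw$ and some $Bu'$ strictly above $\max_w Dw$, i.e. $\mathcal{U}_{-*}^1$ and $\mathcal{U}_{+*}^1$ are both nonempty, which is (\ref{Eq:SCconvergence}) for $n=1$. As in the earlier proof, the implications in the chain collapse into equivalences once $\mathbf{U}$ is taken to consist of extremal points.

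I expect the main obstacle to be the geometric lemma of the second paragraph, namely making precise that one point per open orthant pins the origin in the \emph{interior} of the hull, since the remaining set manipulations are direct analogues of the already-proven non-strict case.
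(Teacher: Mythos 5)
Your proposal is correct and follows essentially the same route as the paper: invoke Lemma \ref{Lemma:EquivalentSCconvergence}, collect one control per open orthant, fix an arbitrary $w$, deduce $0 \in int(hull\{Bu^j - Dw\})$, and propagate through the same chain of inclusions, with the same reduction to intervals for the $n=1$ converse. The only difference is that you explicitly justify, via the supporting-hyperplane characterization, the first implication (one point strictly inside each open orthant places the origin in the interior of their convex hull), which the paper asserts without proof; your argument for it is sound.
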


\begin{proof}
When (\ref{Eq:SCconvergence}) holds, 
by Lemma \ref{Lemma:EquivalentSCconvergence} we have that for each open 
orthant $O^j$, there exists a $u^j \in \mathcal{U}^m$
such that $B u^j - D \mathcal{W}^p \subset O^j$.
Let $\mathbf{U} = \{u^1,\hdots, u^{2^n} \}$.
Pick any $w \in \mathcal{W}^p$. We have (again with some abuse of notation)
\begin{eqnarray*}
& B u^j - D \mathcal{W}^p \subset O^j \textrm{    for   } j=1,\hdots, 2^n \\
\Rightarrow & 0 \in int( hull \{ B u^j - Dw | u^j \in \mathbf{U} \}) \\
\Leftrightarrow & 0 \in int(hull \{ B \mathbf{U} - Dw \}) \\
\Rightarrow & 0 \in int( hull \{ B \mathcal{U}^m - Dw \}) \\
\Leftrightarrow & 0 \in int (hull \{ B \mathcal{U}^m \} - Dw) \\
\Leftrightarrow & \exists x \in int( hull\{ B \mathcal{U}^m \}) \textrm{   such that   } x=Dw
\end{eqnarray*}
Since the choice of $w$ was arbitrary in $\mathcal{W}^p$, we conclude that for any $w \in \mathcal{W}^p$ there exists $x \in int (hull \{ B \mathcal{U}^m \})$ such that $x =Dw$.
Hence $D \mathcal{W}^p \subset int( hull \{ B \mathcal{U}^m \})$, and 
$hull \{ D \mathcal{W}^p \} \subseteq int (hull \{ B \mathcal{U}^m \})$.

When $n=1$, both implications in the above derivation become equivalences
(the second by picking $\mathbf{U}$ to correspond to extremal points),
and the converse statement holds.
\end{proof}

\subsection{Connections to Sub-Tangentiality Conditions}

Lemma \ref{Lemma:NCexistence} can be interpreted as a counterpart to the necessary 
condition in Nagumo's Theorem \cite{JOUR:Nagumo1942},
adapted to the discrete-time, forced, and discrete alphabet setting of interest here.
Nagumo's sub-tangentiality condition
and related conditions (see \cite{JOUR:Bl1999} for an overview),
while known to be sufficient for continuous-time systems
and linear discrete-time systems under analog inputs,
are not sufficient for general discrete-time systems.
As such, it is not surprising that additional constraints
need to be placed on the set to ensure sufficiency of condition (\ref{Eq:SCexistence})
in establishing set invariance.

\section{Illustrative Examples}
\label{Sec:IllustrativeExamples}

We begin with a simple scalar example for intuition.
We then revisit the production network
introduced in Section \ref{Ssec:RelevanceModel}.

\subsection{A Scalar Example}
\label{Ex:One}
Consider the scalar dynamics ($n=m=p=1$) given by
\begin{displaymath}
x(t+1) = x(t) + B u(t) - D w(t)
\end{displaymath}
with alphabets $\mathcal{U} =\{-100, -2, 3, 150\}$ and $\mathcal{W}= \{-6,4\}$.
We begin by computing sets $\mathcal{U}_+$ and $\mathcal{U}_-$ (no need for indices `$i$' in this case)
by inspecting Table \ref{Table:Ex1}
whose entries are simply the values of `$Bu-Dw$': 
This table can be interpreted as the payoff matrix of a zero sum game between players 
$u$ and $w$.

\begin{table}
\caption{Payoff matrix for zero sum game in Example 1}
\label{Table:Ex1}
\centering
\begin{tabular}{c || c | c | c}
$u / w$ & $-6$ & $4$ \\
\hline \hline
$-100$ &  $-100B+6D$ & $-100B-4D$\\
\hline
$-2$ &  $-2B+6D$ & $-2B-4D$ \\
\hline
$3$ & $3B+6D$ & $3B-4D$\\
\hline
$150$ & $150B+6D$ & $150B-4D$
\end{tabular}
\end{table}

We have $\mathcal{U}_- \neq \emptyset$ and $\mathcal{U}_+ \neq \emptyset$ iff
\begin{displaymath}
\left\{ \begin{array}{c}
-100B+6D \leq 0 \\
-100B - 4D \leq 0
\end{array} \right.
\textrm{       and        }
\left\{ \begin{array}{c}
150B+6D \geq 0 \\
150B - 4D \geq 0
\end{array} \right.
.
\end{displaymath}
We thus conclude that a robustly control invariant set indeed exists iff
$B \geq \max \{0.06D,-0.04D\}$,
and is moreover globally attractive provided strict equality holds.
Consider for example the case where $B=D=1$,
for which a robustly control invariant set is guaranteed to exist:
It is straighforward to verify that $X =[0, 157]$ is robustly control invariant
(it is in fact the smallest such set).

\subsection{A Six Node, Ten Arc Production Network}
\label{EX:Two}

We revisit the production process described in Section \ref{Ssec:RelevanceModel}
and represented by the network in Figure 1. 
We have $x(t) \in \mathbb{R}^6$,
$u(t) \in \mathcal{U}^{10}$,
$w(t) \in \mathcal{W}^6$,
and matrices $B$ and $D$ are given in (\ref{Eq:MatrixB}) and (\ref{Eq:MatrixD}),
respectively.
In this example, we assume that
$ \mathcal U = \{a \in \mathbb{Z} | 0 \leq a \leq 400 \}$,
representing the operator's ability to completely shut down a production process
or determine its rate up to some maximum level.
We assume that $\mathcal W = \{b \in \mathbb{Z} | 20 \leq b \leq 40 \}$.

Under these assumptions, condition (\ref{Eq:SCconvergence}) holds.
We proceed to verify this without explicitly constructing the sets $\mathcal{U}_z^*$,
by employing a heuristic approach.
Indeed, 
we solve the following linear program with decision variables $\lambda$ and $u$ for each vertex $z \in \mathbb{B}^6$: 
\begin{displaymath}
\begin{array}{rlc} 
\textrm{min   }& \lambda \\ 
\textrm{subject to    } & [Bu]_i  - 40 \geq \lambda & \mbox{if $[z]_i=0$}\\
& [Bu]_i - 20 \leq - \lambda & \mbox{if $[z]_i=1$}\\
& [u]_i \geq 0 \\
& [u]_i \leq 400 \\
& \lambda \geq  \epsilon  
\end{array}
\end{displaymath}
where $\epsilon > 0$ is a chosen parameter. 
Note that the linear program returns a fractional solution that can be rounded to an integer
solution within the admissible input set.
Also note that the existence, for each choice of $z \in \mathbb{B}^6$, 
of an integer solution satisfying the first four
LP constraints (for any $\lambda \geq 0$) effectively ensures satisfaction of (\ref{Eq:SCconvergence}),
since these constraints represent extremal (worst case) values of the disturbance inputs.
Parameter $\epsilon$ allows us some flexibility in applying this heuristic:
A larger $\epsilon$ translates into a higher likelihood that the rounded integer solution will satisfy the desired constraints,
at the expense of missing potential solutions if $\epsilon$ is too large.
Once we identify rounded integer controls and double check that condition (\ref{Eq:SCconvergence}) still holds,
we store these values in a look-up table ready to be implemented in feedback form,
chosen in agreement with the control law proposed in the 
proof of Lemma \ref{Lemma:ConvergenceSet}. 
Having chosen the feedback law, 
we can now also compute the robustly globally attractive control invariant set
$[0,2L_1^*] \times\ \hdots \times [0,2L_6^*]$:
The obtained values are displayed in Table \ref{table:Li},
and are indicated by the dashed blue lines in Figure \ref{fig:Samplepath}.

\begin{table}
\caption{The robustly control invariant box $[0,2L_1^*] \times\ \hdots \times [0,2L_6^*]$.}  
\label{table:Li}
\centering
\begin{tabular}{|c|c|c|c|c|c|}
\hline
 $L_1^*=625$ & $L_2^*=625$ & $L_3^*=221$ & $L_4^*=221$ & $L_5^*=237$ & $L_6^*=237 $ \\\hline
\end{tabular}
\end{table}

Having computed the feedback control law offline,
we report on Monte Carlo simulations of the closed loop system.
We ran 30 different paths with 600 samples (horizon steps) each starting
from uniformly randomly selected initial states 
in the interval $[-1000,2L_i^*+1000]$ and with 
random demand uniformly drawn from the admissible intervals. 
The first sample path starting from initial state $(1242,1543,1281,779,-161,1468)$
is depicted in Fig. \ref{fig:Samplepath},
with the dashed lines delineating the invariant hyperbox.
All simulations are carried out with MATLAB
on an Intel(R) Core(TM)2 Duo CPU P8400 at 2.27 GHz and 3GB of RAM. 
The run time of the offline computation of the control law is less than 5 seconds, 
while the run time of loading all the controls in the look-up table
and running the Monte Carlo simulations is about 12 seconds.

\begin{figure} [htb]
\centering
\includegraphics[width=11cm]{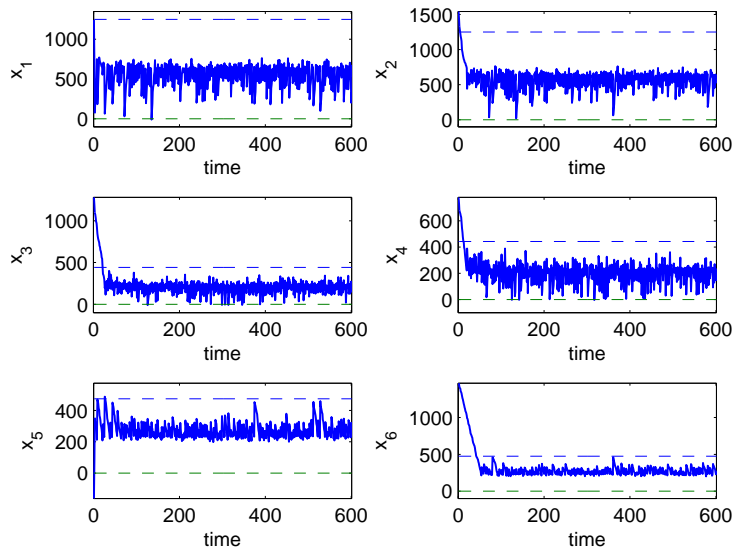}
\caption{First sample path of the Monte Carlo simulations for Example \ref{EX:Two}.}  
\label{fig:Samplepath}
\end{figure}

\section{Conclusions \& Future Work}
\label{Sec:Conclusions}

We considered logistic networks in which the control and disturbance inputs take their values
in finite sets.
We established a necessary and sufficient condition for the existence of robustly control invariant hyperboxes,
and we showed that a stronger version of this condition is sufficient,
albeit not necessary in general, to guarantee robust global attractivity.
The proposed conditions are combinatorial in nature,
which is not surprising as the problem is known to be NP-hard even when the 
input signals are analog.

Future work will focus on deriving bounds on the size of the smallest such invariant hyperboxes,
as well as considering more interesting models of finite alphabet uncertainty.

\section{Acknowledgments}
\label{Sec:Acknowledgments}

D. C. Tarraf's research was supported by NSF CAREER award ECCS 0954601
and AFOSR Young Investigator award FA9550-11-1-0118.
D. Bauso's research was supported by the 2012 ``Research Fellow"
Program of the Dipartimento di Matematica,
Universit\'a di Trento and by the PRIN 20103S5RN3
``Robust decision making in markets and organization."


\bibliographystyle{IEEEtranS}
\bibliography{References}

\end{document}